\theoremstyle{plain}
\newtheorem{lemma}{Lemma}[section] 
\newtheorem{theorem}[lemma]{Theorem}
\newtheorem{corollary}[lemma]{Corollary}
\theoremstyle{definition}
\newtheorem{example}[lemma]{Example}
\newtheorem{definition}[lemma]{Definition}
\newcommand{\ol}{\overline}
\newcommand{\Por}{P_{(H,S)}}
\newcommand{\so}{\mathbf{s}}
\newcommand{\ra}{\mathbf{r}}
\title[Graded ideals of Leavitt path algebras]{Every graded ideal of a Leavitt path algebra is\\ {\em graded} isomorphic to a Leavitt path algebra}  
\author{Lia Va\v s}
\address{Department of Mathematics, Saint Joseph's University, Philadelphia, PA 19131, USA}
\email{lvas@sju.edu}
\subjclass{16S88, 16D25, 16W50, 16D70}  
\keywords{Leavitt path algebra, graded ring, graded ideal}
\thanks{The author is very grateful to the referee for the suggestion to consider formulating and proving the $C^*$-algebra version of the main result (now Corollary \ref{corollary_C_star}).}
\begin{document}
\begin{abstract} 
We show that every graded ideal of a Leavitt path algebra is {\em graded} isomorphic to a Leavitt path algebra. It is known that a graded ideal $I$ of a Leavitt path algebra is isomorphic to the Leavitt path algebra of a graph, known as the generalized hedgehog graph, which is defined based on certain sets of vertices uniquely determined by $I$. However, this isomorphism may not be graded. We show that replacing the short ``spines'' of the generalized hedgehog graph with possibly fewer, but then necessarily longer spines, we obtain a graph (which we call the porcupine graph) such that its Leavitt path algebra is {\em graded} isomorphic to $I$. Our proof adapts to show that for every closed
gauge-invariant ideal $J$ of a graph $C^*$-algebra, there is a gauge-invariant $*$-isomorphism mapping the graph $C^*$-algebra of the porcupine graph of $J$ onto $J.$ 
\end{abstract}

\maketitle

\section{Introduction}

If $E$ is a graph and $L_K(E)$ is its Leavitt path algebra, it is known that every graded ideal $I$ of $L_K(E)$ is uniquely determined by a pair $(H,S)$ of certain subsets of vertices, known as an admissible pair (we review the definition in section \ref{subsection_LPA_review}). For every such admissible pair, one can define a graph, referred to as the generalized hedgehog graph in \cite{LPA_book}, such that $I$ is isomorphic to the Leavitt path algebra of this graph. 
The name ``hedgehog'' comes from the construction in which one attaches new edges (the ``spines'') to $H\cup S$ (the ``body''). We modify this construction by replacing the added edges with possibly fewer paths but then necessarily of length larger than one. Because of the longer spines, we call the resulting graph the porcupine graph of $(H,S).$ In the main result of the paper, Theorem \ref{theorem_ideals}, we construct a {\em graded} $*$-isomorphism of $I$ and the Leavitt path algebra of the porcupine graph of the admissible pair corresponding to $I.$ 

We adapt our result to graph $C^*$-algebras also. In Corollary \ref{corollary_C_star}, we show that for every closed gauge-invariant ideal $J$ of a graph $C^*$-algebra there is a gauge-invariant (and graded in the $C^*$-algebra sense) $*$-isomorphism which maps the graph $C^*$-algebra of the porcupine graph of $J$ onto $J.$ Just as in the algebraic case, it is known that such ideal $J$ is $*$-isomorphic to a graph $C^*$-algebra, but it was not clear whether there is a gauge-invariant $*$-isomorphism between $J$ and a graph $C^*$-algebra. We show that such an isomorphism indeed exits.  

\section{Prerequisites}
\label{section_prerequisites}

\subsection{Graded rings and  \texorpdfstring{$\ast$}{TEXT}-rings prerequisites}\label{subsection_graded_rings_prerequisites}
A ring $R$ (not necessarily unital) is {\em graded} by a group $\Gamma$ if $R=\bigoplus_{\gamma\in\Gamma} R_\gamma$ for additive subgroups $R_\gamma$ and $R_\gamma R_\delta\subseteq R_{\gamma\delta}$ for all $\gamma,\delta\in\Gamma.$ The elements of the set $\bigcup_{\gamma\in\Gamma} R_\gamma$ are said to be {\em homogeneous} and $\gamma$ is the {\em degree} of any nonzero element of $R_\gamma.$ The grading is {\em trivial} if $R_\gamma=0$ for every $\gamma\in \Gamma$ which is not the group identity. We adopt the standard definitions of graded ring homomorphisms, graded algebras, and graded ideals as defined in \cite{Roozbeh_book}. 

A ring $R$ is an involutive ring or a $*$-ring, if there is an anti-automorphism $*:R\to R$ of order two. If $R$ is also a $K$-algebra for some commutative, involutive ring $K$, then $R$ is a $*$-algebra if $(kx)^*=k^*x^*$ for all $k\in K$ and $x\in R.$ If $R$ and $S$ are $*$-rings, a ring homomorphism $\phi: R\to S$ is a $*$-homomorphism if $\phi(x^*)=\phi(x)^*$ for every $x\in R.$

\subsection{Leavitt path algebras prerequisites}
\label{subsection_LPA_review} 

Let $E$ be a directed graph, let $E^0$ denote the set of vertices, $E^1$ the set of edges, and  $\so$ and $\ra$ denote the source and range maps of $E.$   A {\em sink} of $E$ is a vertex which emits no edges and an {\em infinite emitter} is a vertex which emits infinitely many edges. A vertex of $E$ is {\em regular} if it is not a sink or an infinite emitter. A {\em path} is a single vertex or a sequence of edges $e_1e_2\ldots e_n$ for some positive integer $n$ such that $\ra(e_i)=\so(e_{i+1})$ for $i=1,\ldots, n-1.$  

Extend a graph $E$ to the graph with the same vertices and with edges $E^1\cup \{e^*\mid e\in E^1\}$ where the range and source functions are the same as in $E$ for $e\in E^1$ and $\so(e^*)=\ra(e)$  and $\ra(e^*)=\so(e)$ for the added edges. 
If $K$ is any field, the \emph{Leavitt path algebra} $L_K(E)$ of $E$ over $K$ is a free $K$-algebra generated by the set  $E^0\cup E^1\cup\{e^\ast\mid e\in E^1\}$ such that for all vertices $v,w$ and edges $e,f,$

\begin{tabular}{ll}
(V)  $vw =0$ if $v\neq w$ and $vv=v,$ & (E1)  $\so(e)e=e\ra(e)=e,$\\
(E2) $\ra(e)e^\ast=e^\ast\so(e)=e^\ast,$ & (CK1) $e^\ast f=0$ if $e\neq f$ and $e^\ast e=\ra(e),$\\
(CK2) $v=\sum_{e\in \so^{-1}(v)} ee^\ast$ for each regular vertex $v.$ &\\
\end{tabular}

By the first four axioms, every element of $L_K(E)$ can be represented as a sum of the form $\sum_{i=1}^n k_ip_iq_i^\ast$ for some $n$, paths $p_i$ and $q_i$, and elements $k_i\in K,$ for $i=1,\ldots,n$ where $v^*=v$ for $v\in E^0$ and $p^*=e_n^*\ldots e_1^*$ for a path $p=e_1\ldots e_n.$ Using this representation, one can make $L_K(E)$ into an involutive ring by $\left(\sum_{i=1}^n k_ip_iq_i^\ast\right)^*=\sum_{i=1}^n k_i^*q_ip_i^\ast$ where $k_i\mapsto k_i^*$ is any involution on $K$. For more details on these basic properties, see \cite{LPA_book}.

If we consider $K$ to be trivially graded by $\mathbb Z,$ $L_K(E)$ is naturally graded by $\mathbb Z$ so that the $n$-component $L_K(E)_n$ is the $K$-linear span of the elements $pq^\ast$ for paths $p, q$ with $|p|-|q|=n$ where $|p|$ denotes the length of a path $p.$ While one can grade a Leavitt path algebra by any group (see \cite[Section 1.6.1]{Roozbeh_book}), we consider only the natural grading by $\mathbb Z.$ 

If a $K$-algebra $R$ contains elements $a_v,$ $b_e,$ and $c_{e^*}$ which satisfy the axioms (V), (E1), (E2), (CK1), and (CK2) where $v\in E^0$ and $e\in E^1$ for some graph $E$, the Universal Property of $L_K(E)$ ensures that the map $\phi: v\mapsto a_v, e\mapsto b_e, e^*\mapsto c_{e^*}$ has a unique $K$-algebra homomorphism extension $\phi: L_K(E)\to R$ (see \cite[Remark 1.2.5]{LPA_book}). If $R$ is $\mathbb Z$-graded and the elements $\phi(v),$ $\phi(e),$ and $\phi(e^*)$ have degrees 0, 1, and $-1$ respectively, such extension is graded and the Graded Uniqueness Theorem (\cite[Theorem 2.2.15]{LPA_book}) states that $\phi$ is injective if and only if $\phi(v)\neq 0$ for every $v\in E^0.$ 
If $R$ is involutive and if $a_v^*=a_v$ for every $v\in E^0$ and $(b_e)^*=c_{e^*}$ for every $e\in E^1,$ then $\phi$ is a $*$-homomorphism (see \cite[Lemma 4.7]{Lia_traces}).

\subsection{Graded ideals of a Leavitt path algebra}
A subset $H$ of $E^0$ is said to be {\em hereditary} if  $\ra(p)\in H$ for any path $p$ such that $\so(p)\in H.$ The set $H$ is {\em saturated} if $v\in H$ for any regular vertex $v$ such that $\ra(\so^{-1}(v))\subseteq H.$ We recall a construction from \cite{Tomforde}.
If $H$ is hereditary and saturated, let 
\[B_H=\{v\in E^0-H\,|\, v\mbox{ infinite emitter and }\so^{-1}(v)\cap \ra^{-1}(E^0-H)\mbox{ is nonempty and finite}\}\mbox{ and}\]
\[v^H=v-\sum_{e\in \so^{-1}(v)\cap \ra^{-1}(E^0-H)}ee^*\;\;\;\;\mbox{ for }v\in B_H.\]

An {\em admissible pair} is an ordered pair $(H, S)$ where $H\subseteq E^0$ is hereditary and saturated and $S\subseteq B_H.$ For such a pair, let $I(H,S)$ denote the graded ideal generated by homogeneous elements $H\cup \{v^H \,|\, v\in S \}.$ 
The ideal $I(H,S)$ is the $K$-linear span of the elements $pq^*$ for paths $p,q$ with $\ra(p)=\ra(q)\in H$ and the elements $pv^Hq^*$ for paths $p,q$ with $\ra(p)=\ra(q)=v\in S$ (see \cite[Lemma 5.6]{Tomforde}). The converse holds as well: for every graded ideal $I$, the vertices in $I$ form a hereditary and saturated set $H$ and the set of infinite emitters such that $v^H\in I$ is a subset of $B_H$ (\cite[Theorem 5.7]{Tomforde}, also \cite[Theorem 2.5.8]{LPA_book}). 
For an admissible pair $(H, S),$ let $E_{(H,S)}$ be the graph defined by
\[
\begin{array}{l}
F_1(H,S)=\{e_1\ldots e_n\mbox{ is a path of }E\mid \ra(e_n)\in H, \so(e_n)\notin H\cup S\}, \\
F_2(H,S)=\{p\mbox{ is a path of }E\mid \ra(p)\in S,\; |p|>0\},
\hskip.3cm \ol{F_i}(H,S)\mbox{ is a copy of }F_i(H,S), i=1,2,\\
E_{(H,S)}^0 = H\cup S\cup F_1(H,S)\cup F_2(H,S),\mbox{ and}\\
E_{(H,S)}^1 =\{e\in E^1\,|\, \so(e)\in H\}\cup \{e\in E^1\,|\, \so(e)\in S, \ra(e)\in H\}\cup \ol{F_1}(H,S)\cup \ol{F_2}(H,S)\mbox{ with }\so\mbox{ and }\ra\\
\mbox{the same as on $E^1$ for $e\in E^1\cap E^1_{(H,S)}$ and  }\so(\ol p)=p, \ra(\ol p)=\ra(p)\mbox{ for }\ol p\in\ol{F_1}(H,S)\cup \ol{F_2}(H,S).
\end{array} 
\]
By \cite[Theorem 6.1]{Ruiz_Tomforde} (also \cite[Theorem 2.5.22]{LPA_book}), the ideal $I(H,S)$ is isomorphic to $L_K (E_{(H,S)}).$

In \cite{LPA_book}, the graph $E_{(H,\emptyset)}$ is referred to as the {\em hedgehog graph} and 
the graph $E_{(H,S)}$ is referred to as the {\em generalized hedgehog graph}. The term ``hedgehog'' is chosen since the added edges ending at $H\cup S$ resemble spines of the hedgehog body $H\cup S.$ The isomorphism $L_K(E_{(H,S)})\to I(H,S)$ is defined so that the edges of $E_{(H,S)}$ are mapped to certain homogeneous elements of $I(H,S)$ of {\em positive} degree, not necessarily degree one. Thus, the degrees are not necessarily preserved under this map and so this isomorphism may not be graded. 

In \cite{Gonzalo_Ranga_Mercedes_selfinjective}, the authors consider a version of $E_{(H,S)}$ they denote $_HE_S$ (see \cite{Ruiz_Tomforde} for details on differences in definitions). \cite[Proposition 3.7]{Gonzalo_Ranga_Mercedes_selfinjective} states that $I(H,S)$ and $L_K(_HE_S)$ are {\em graded} isomorphic. However, the map $L_K(_HE_S)\to I(H,S)$ in the proof is defined analogously to the map $L_K(E_{(H,S)})\to I(H,S)$ and, as we noted above, this map is not necessarily a graded map. The last section of \cite{Ruiz_Tomforde} contains some further details on a problem with the definition of $_HE_S$ and \cite[Proposition 3.7]{Gonzalo_Ranga_Mercedes_selfinjective}.

\section{The main result}
\label{section_main}
We modify the construction of the generalized hedgehog graph of an admissible pair $(H,S)$ by making the spines added to the body $H\cup S$ possibly fewer in number but then necessarily longer in length. Because of the longer spines, we call the resulting graph the porcupine graph and denote it by $\Por.$ The modified construction enables us to create a degree preserving isomorphism $L_K(\Por)\to I(H,S).$ 

\begin{definition}
For an admissible pair $(H,S),$ we keep the definitions of $F_1(H,S)$ and $F_2(H,S).$
For each $e\in (F_1(H,S)\cup F_2(H,S))\cap E^1,$ let $w^e$ be a new vertex and $f^e$ a new edge such that $\so(f^e)=w^e$ and $\ra(f^e)=\ra(e).$
Continue this process inductively as follows. 
For each path $p=eq$ where $q\in F_1(H,S)\cup F_2(H,S)$ and $|q|\geq 1,$ add a new vertex $w^p$ and a new edge $f^p$ such that $\so(f^p)=w^p$ and $\ra(f^p)=w^q.$

We define the {\em porcupine graph} $\Por$ as follows. The set of vertices of $\Por$ is 
\[H\cup S\cup \{w^p \mid p\in F_1(H,S)\cup F_2(H,S)\}.\]
The set of edges of $\Por$  is
\[\{e\in E^1\,|\, \so(e)\in H\}\cup \{e\in E^1\,|\, \so(e)\in S, \ra(e)\in H\}\cup \{f^p\mid p\in F_1(H,S)\cup F_2(H,S)\}\]
The $\so$ and $\ra$ maps of $\Por$ are the same as in $E$ for the common edges and they are defined as above for the new edges.     
\end{definition}

Before formulating and proving the main result, we present examples comparing and contrasting the generalized hedgehog and the porcupine graphs of an admissible pair.

\begin{example}
Let $E$ be the Toeplitz graph $\;\;\xymatrix{{\bullet}^w\ar@(lu,ld)_e  \ar[r]^g & {\bullet}^v}$ and let $H=\{v\}.$ The hedgehog graph of $(H, \emptyset)$ is listed first and the porcupine graph second. 
\[\xymatrix{^{eg}\bullet\ar[dr]^{\ol{eg}} &  \bullet\ar[d]^{\ol{e^2g}}  & \hskip-1.1cm^{e^2g}\hskip.7cm \circ \ar@{.>}[dl] \\
^g \bullet \ar[r]^{\ol g}  & \bullet_v   &  \circ \ar@{.>}[l] }
\hskip2cm \xymatrix{\\ \ar@{.>}[r] &
\bullet^{w^{e^2g}} \ar[r]^{f^{e^2g}} &  \bullet^{w^{eg}}\ar[r]^{f^{eg}}&  \bullet^{w^g}\ar[r]^{f^g} & \bullet^v }\]
The graded isomorphism of Theorem \ref{theorem_ideals} is such that the path $e^{n-1}g$ of length $n$ corresponds to the path $f^{e^{n-1}g}f^{e^{n-2}g}\ldots f^g$ of length $n$ also. 

We consider another example with an infinite emitter. Let $E$ be the graph $\xymatrix{
\bullet \ar[r]^{e_1} &{\bullet}^{w} \ar@{.} @/_1pc/ [r] _{\mbox{ } } \ar@/_/ [r] \ar [r] \ar@/^/ [r] \ar@/^1pc/ [r] \ar[d]^{e_2} 
& {\bullet}^{v}\\ & \bullet\ar[ur]_{e_3}\ar[r]&\bullet }$ and consider $H=\{v\}, S=\{w\}$. 
In this case, $F_1(H,S)=\{e_3, e_2e_3, e_1e_2e_3\} $ and $F_2(H,S)=\{e_1\}.$ The generalized hedgehog graph of $(H,S)$ is listed first and the porcupine graph second. 
\[\xymatrix{&&\bullet\ar[d]^{\ol{e_3}}& \hskip-2.7cm ^{e_3} \\ \bullet^{e_1} \ar[r]^{\ol{e_1}} &{\bullet}^{w} \ar@{.} @/_1pc/ [r] _{\mbox{ } } \ar@/_/ [r] \ar [r] \ar@/^/ [r] \ar@/^1pc/ [r]  & {\bullet}^{v} & \bullet^{e_2e_3}\ar[l]_{\ol{e_2e_3}}\\ && \bullet\ar[u]_{\ol{e_1e_2e_3}}& \hskip-2.1cm^{e_1e_2e_3}}\hskip1cm 
\xymatrix{\\\bullet^{w^{e_1}} \ar[r]^{f^{e_1}} &{\bullet}^{w} \ar@{.} @/_1pc/ [r] _{\mbox{ } } \ar@/_/ [r] \ar [r] \ar@/^/ [r] \ar@/^1pc/ [r]  & {\bullet}^{v} & \bullet^{w^{e_3}}\ar[l]_{f^{e_3}}& \bullet^{w^{e_2e_3}}\ar[l]_{f^{e_2e_3}}&\bullet^{w^{e_1e_2e_3}}\ar[l]_{f^{e_1e_2e_3}} }\]
\end{example}

\begin{theorem}
For an admissible pair $(H,S)$ of a graph $E,$
\[I(H,S)\mbox{ and } L_K (\Por)\mbox{ are graded $*$-isomorphic.}\]
Thus, every graded ideal of a Leavitt path algebra is graded $*$-isomorphic to a Leavitt path algebra.
\label{theorem_ideals} 
\end{theorem}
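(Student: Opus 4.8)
The strategy is to construct the graded $*$-isomorphism directly using the Universal Property of $L_K(\Por)$, rather than going through the generalized hedgehog graph. First I would identify, inside $I(H,S)\subseteq L_K(E)$, a family of elements indexed by the vertices and edges of $\Por$ that satisfies the Leavitt path algebra relations (V), (E1), (E2), (CK1), (CK2) for $\Por$, all in the correct degrees (vertices in degree $0$, edges in degree $1$, ghost edges in degree $-1$) and compatibly with the involution. For a vertex $v\in H\cup S$ the natural candidate is $v$ itself if $v\in H$ and $v^H$ if $v\in S$. For the new vertices $w^p$ with $p=e_1e_2\cdots e_n\in F_1(H,S)\cup F_2(H,S)$, the candidate should be the range projection associated to the tail of $p$: roughly $a_{w^p} = (e_1\cdots e_n)^*(e_1\cdots e_n)$ when $p\in F_1$ (so that this equals $\ra(e_n)\in H$ conjugated back along $p$) and the analogous element built from $v^H$ when $p\in F_2$; that is, $a_{w^p}=p^*p$ suitably truncated so it lands in degree $0$ and inside $I(H,S)$. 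The new edge $f^p$ with $\so(f^p)=w^p$, $\ra(f^p)=w^q$ (where $p=eq$) should map to the partial isometry implementing the inclusion of these range projections along the single edge $e$, i.e. essentially $a_{f^p}= p^* p$-to-$q^*q$ conjugation truncated to degree $1$; concretely $a_{f^p}$ looks like $q^*\, e^*\cdots$ — an element of degree $+1$ — and $c_{(f^p)^*}=(a_{f^p})^*$.

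The key steps in order: (1) write down these candidate elements explicitly and verify they are homogeneous of the correct degrees and that $a_{w^p}\ne 0$ for every vertex of $\Por$ (nonvanishing is where one uses that $p$ is a genuine path of $E$ reaching $H$ or $S$, so the relevant range projection is a nonzero vertex or $v^H$); (2) check relations (V), (E1), (E2), (CK1) — these are mostly formal manipulations using the $E$-relations and the fact that the $f^p$ are built from orthogonal pieces of paths; (3) check the Cuntz–Krieger relation (CK2) at each regular vertex of $\Por$ — this is the delicate combinatorial point, because the edges emitted in $\Por$ by a vertex $v\in H$ are exactly the $E$-edges out of $v$ *plus* the $f^p$ for paths $p$ of $F_1\cup F_2$ of length one ending at $v$, and one must see that $\sum ee^* + \sum f^p (f^p)^* = v$ inside $I(H,S)$, which amounts to re-deriving (CK2) for $E$ after splitting $\so^{-1}(v)$ according to whether the edge stays in $H$ or leaves it; similarly at $w^q$ the single emitted edge $f^{eq}$ must satisfy $f^{eq}(f^{eq})^* = a_{w^q}$; for infinite emitters $v\in S$ one checks $v^H$ behaves correctly; (4) invoke the Universal Property to get a graded $*$-homomorphism $\phi\colon L_K(\Por)\to I(H,S)$; (5) apply the Graded Uniqueness Theorem (\cite[Theorem 2.2.15]{LPA_book}) using step (1) to conclude $\phi$ is injective; (6) prove surjectivity by showing the image contains the $K$-linear spanning set of $I(H,S)$ described after \cite[Lemma 5.6]{Tomforde}, namely all $pq^*$ with $\ra(p)=\ra(q)\in H$ and all $pv^Hq^*$ with $\ra(p)=\ra(q)=v\in S$ — here one expresses each path $p$ of $E$ ending in $H\cup S$ as a product of an edge-with-source-in-$H\cup S$ and a composite of $f$'s along the corresponding element of $F_1\cup F_2$, which is exactly the content of how $\Por$ was built.

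The main obstacle I expect is getting the bookkeeping of step (3)/(6) exactly right: the porcupine construction replaces each long spine of the hedgehog graph by a *single* path of $f$'s, so a path $p=e_1\cdots e_n\in F_1(H,S)$ corresponds in $\Por$ to the path $f^{e_1\cdots e_n} f^{e_2\cdots e_n}\cdots f^{e_n}$ of the *same* length $n$, and one must make sure the inductively-defined vertices $w^{e_1\cdots e_n}, w^{e_2\cdots e_n},\dots, w^{e_n}$ and their images match up so that composing the $f$'s recovers the element of $I(H,S)$ attached to $p$ in the correct degree (degree $n$, unlike the hedgehog graph where the corresponding edge $\ol p$ had degree $1$ but mapped to a degree-$n$ element — the very mismatch that broke gradedness there). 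Once the dictionary "path $p$ of $E$ into $H\cup S$ $\leftrightarrow$ composite of $f$'s along $p$" is pinned down and shown to be degree-preserving and involution-compatible, surjectivity and the final statement (every graded ideal is of the form $I(H,S)$ by \cite[Theorem 5.7]{Tomforde}, hence graded $*$-isomorphic to $L_K(\Por)$) follow immediately.
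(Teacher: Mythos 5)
Your overall strategy coincides with the paper's: build a Cuntz--Krieger $\Por$-family inside $I(H,S)$ with vertices in degree $0$ and edges in degree $\pm 1$, extend by the Universal Property, get injectivity from the Graded Uniqueness Theorem, and get surjectivity from the dictionary sending $e_1\cdots e_n\in F_1(H,S)\cup F_2(H,S)$ to $f^{e_1\cdots e_n}f^{e_2\cdots e_n}\cdots f^{e_n}$. That dictionary and your steps (4)--(6) are exactly what the paper does.

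However, your concrete formulas are reversed, and the reversal is not cosmetic. First, $(e_1\cdots e_n)^*(e_1\cdots e_n)=\ra(e_n)$ by (CK1), so your candidate $a_{w^p}=p^*p$ assigns the \emph{same} element $\ra(p)$ to every $w^p$ with a common range; this violates (V) and destroys injectivity. The correct choice (which your parenthetical ``$\ra(e_n)$ conjugated back along $p$'' describes, but your displayed formula does not) is $a_{w^p}=pp^*$ for $p\in F_1(H,S)$ and $a_{w^p}=p\,\ra(p)^H p^*$ for $p\in F_2(H,S)$; correspondingly $a_{f^{eq}}=e\,qq^*$ (resp.\ $e\,q\ra(q)^Hq^*$), a degree-one monomial beginning with the edge $e$, not an element of the shape $q^*e^*\cdots$. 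Second, step (3) misreads the orientation of the spines: in $\Por$ one has $\so(f^p)=w^p$, with $\ra(f^e)=\ra(e)\in H\cup S$ when $|e|=1$ and $\ra(f^{eq})=w^q$ otherwise, so the spines point \emph{into} the body. A vertex $v\in H$ therefore emits no new edges; since $H$ is hereditary, its emitted edges are exactly its $E$-edges and (CK2) at $v$ is inherited verbatim from $L_K(E)$. The identity $\sum ee^*+\sum f^p(f^p)^*=v$ you propose to verify is neither true nor needed. The only nontrivial (CK2) checks occur at the vertices $w^p$, each of which emits the single edge $f^p$, so what must be verified is $a_{f^p}(a_{f^p})^*=a_{w^p}$ (four cases, according to whether $|p|=1$ and whether $p\in F_1$ or $F_2$); your ``$f^{eq}(f^{eq})^*=a_{w^q}$'' again interchanges the source and range of $f^{eq}$. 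With the projections and partial isometries corrected as above, the remainder of your plan goes through as in the paper.
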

\begin{proof}
We define a function $\phi$ which maps the vertices and edges of $\Por$ to elements of $ I(H,S)$ as follows. For a vertex $v$ of $\Por,$ we let  
\[
\phi(v)=\left\{\begin{array}{ll}
v & \text{if } v\in H\\
v^H & \text{if } v\in S\\
pp^* & \text{if } v=w^p\mbox{ and  }p\in F_1(H,S) \\
p\ra(p)^Hp^* & \text{if } v=w^p\mbox{ and  }p\in F_2(H,S)
\end{array}\right.\]
For an edge $g$ of $\Por,$ we let 
\[
\phi(g)=\left\{\begin{array}{ll}
e & \text{if } g=e\in E^1\mbox{ or if }g=f^e\mbox{ for some }e\in F_1(H,S)\cap E^1\\
e\ra(e)^H  & \text{if } g=f^e\;\; \mbox{ for some }e\in F_2(H,S)\cap E^1\\
epp^* & \text{if } g=f^{ep} \;\mbox{ for some  }e\in E^1\mbox{ and }p\in F_1(H,S)\\ 
ep\ra(p)^Hp^* & \text{if } g=f^{ep} \;\mbox{ for some  }e\in E^1\mbox{ and }p\in F_2(H,S)
\end{array}\right.\]

Extend $\phi$ to $g^*$ for $g\in\Por^1$ by $\phi(g^*)=\phi(g)^*$ and use definitions to check that (V), (E1), (E2), and (CK1) hold for $\phi(v), \phi(g), \phi(g^*), v\in \Por^0, g\in \Por^1.$ We present more details for checking that (CK2) holds. If $v$ is a regular vertex of $\Por,$ then $v$ cannot be in $S,$ so $v\in H$ or $v=w^p$ for some $p\in F_1(H,S)\cup F_2(H,S).$ In the first case, the edges $v$ emits are in $E$ since $H$ is hereditary and the relation $v=\sum_{e\in \so^{-1}(v)} ee^*$ holds in $L_K(\Por)$ since it holds in $L_K(E).$ In the second case, consider the four possible cases: $p=e\in F_1(H,S)\cap E^1,$  $p=e\in F_2(H,S)\cap E^1,$ $p=eq$ for $e\in E^1, q\in F_1(H,S),$ and $p=eq$ for $e\in E^1, q\in F_2(H,S).$ Note that in each case, $v$ emits only one edge $f^p$ so it is sufficient to check that 
$\phi(f^p)\phi((f^p)^*)=\phi(v)$ in each case. 

If $p=e\in F_1(H,S)\cap E^1,$ then $\phi(f^e)\phi((f^e)^*)=ee^*=\phi(v).$ 

If $p=e\in F_2(H,S)\cap E^1,$ then $\phi(f^e)\phi((f^e)^*)=e\ra(e)^H \ra(e)^H e^*=e\ra(e)^H e^*=\phi(v).$

If $p=eq$ for $e\in E^1, q\in F_1(H,S),$ then $\phi(f^p)\phi((f^p)^*)=eqq^*qq^*e^*=eqq^*e^*=pp^*=\phi(v).$ 

If $p=eq$ for $e\in E^1, q\in F_2(H,S),$ then $\phi(f^p)\phi((f^p)^*)=eq\ra(p)^Hq^*q\ra(p)^Hq^*e^*=
eq\ra(p)^Hq^*e^*=p\ra(p)^Hp^*=\phi(v).$
This shows that (CK2) holds. 

By the Universal Property, $\phi$ has a unique extension to a $K$-algebra homomorphism $L_K(\Por)\to I(H,S).$ Since $\phi$ preserves degrees of $v\in \Por^0$ and of $g$ and $g^*$ for $g\in \Por^1,$ this extension, which we denote also by $\phi,$ is a {\em graded} homomorphism. As $\phi(v)^*=\phi(v)$ for $v\in \Por^0$ and $\phi(g^*)=\phi(g)^*$ for $g\in \Por^1,$ $\phi$ is a $*$-homomorphism. The map $\phi$ is nonzero on every vertex of $\Por,$ so $\phi$ is injective by the Graded Uniqueness Theorem.

Since $\phi$ is a $*$-homomorphism, to show surjectivity of $\phi,$ it is sufficient to show (1) and (2) where (1) denotes the condition that $p$ is in the image of $\phi$ for every path such that $\ra(p)\in H,$ and (2) denotes the condition that $p\ra(p)^H$ is in the image of $\phi$ for every path $p$ such that $\ra(p)\in S.$ Both claims hold for paths of zero length since $\phi(v)=v$ if $v\in H$ and $\phi(v)=v^H$ if $v\in S.$ Thus, consider a path $p=e_1\ldots e_n$ for a positive integer $n.$ 

To show (1) assume that $\ra(p)\in H.$ If $\so(p)\in H,$ then $\phi(p)=p$ so the claim holds. If $\so(p)\notin H,$ let $i\in \{1,2,\ldots, n\}$ be the largest such that $\so(e_i)\notin H.$ We consider three cases: (i) $\so(e_i)\notin S,$ (ii)  $\so(e_i)\in S$ and $i=1,$ and (iii) $\so(e_i)\in S$ and $i>1.$ 

In case (i), $e_j\ldots e_i\in F_1(H,S)$ for all $j=1,\ldots i$ and we have that 
\[\phi(f^{e_1\ldots e_i}f^{e_2\ldots e_i}\ldots f^{e_i} e_{i+1}\ldots e_n)=e_1\ldots e_i(e_2\ldots e_i)^*(e_2\ldots e_i)(e_3\ldots e_i)^*\;\ldots\; e_{i-1}e_ie_i^*e_i \;\;e_{i+1}\ldots e_n=\]\[=e_1\ldots e_i\;e_{i+1}\ldots e_n=e_1\ldots e_n=p\]
if $i<n$ and the analogous argument applies to  the case $i=n.$  

In case (ii), $\phi(e_j)=e_j$ for every $j=1,\ldots, n$ and $\phi(p)=p.$ 

In case (iii), $e_j\ldots e_{i-1}\in F_2(H,S)$ for every $j=1,\ldots i-1$ and $\ra(e_{i-1})^He_i=e_i$ so that
\[\phi(f^{e_1\ldots e_{i-1}}f^{e_2\ldots e_{i-1}} \ldots f^{e_{i-1}}e_i\ldots e_n)=e_1\ldots e_{i-1}\ra(e_{i-1})^H(e_2\ldots e_{i-1})^*
e_2\ldots e_{i-1}\ra(e_{i-1})^H\ldots \]\[\ldots 
e_{i-1}^*e_{i-1}\ra(e_{i-1})^He_ie_{i+1}\ldots e_n=
e_1\ldots e_{i-1}\ra(e_{i-1})^He_ie_{i+1}\ldots e_n=
e_1\ldots e_{i-1}e_ie_{i+1}\ldots e_n=p.\]
This shows that (1) holds. 

To show (2), assume that $\ra(p)\in S.$ Then $e_i\ldots e_n$ is in $F_2(H,S)$ for every $i=1,\ldots, n$ and 
\[\phi(f^{e_1\ldots e_n}f^{e_2\ldots e_n}\ldots f^{e_n})=e_1\ldots e_n\ra(p)^H(e_2\ldots e_n)^*(e_2\ldots e_n)\ra(p)^H(e_3\ldots e_n)^*\;\ldots\]\[\ldots e_{n-1}e_n\ra(p)^He_n^*e_n\ra(p)^H=e_1\ldots e_n\ra(p)^H=p\ra(p)^H.\]
This shows that $\phi$ is surjective. 

The second sentence of the theorem is a direct corollary of the first sentence and  \cite[Theorem 5.7]{Tomforde} (also \cite[Theorem 2.5.8]{LPA_book}).  
\end{proof}

\subsection{Graph  \texorpdfstring{$\mathbf{C^*}$}{TEXT}-algebras}
\label{subsection_C_star}

Theorem \ref{theorem_ideals} has its graph $C^*$-algebra version. If $E$ is a graph, the {\em graph $C^*$-algebra of $E$} is the universal
$C^*$-algebra generated by mutually orthogonal projections $\{p_v\mid v\in E^0\}$ and
partial isometries with mutually orthogonal ranges $\{s_e\mid e\in E^1\}$ satisfying the analogues of the (CK1) and (CK2) axioms and the axiom (CK3) stating that $s_es_e^*\leq p_{\so(e)}$ for every $e\in E^1.$ The term ``universal'' in the definition means that the $C^*$-algebra version of the algebraic Universal Property, mentioned before,  holds (see \cite[Definition 5.2.5]{LPA_book}). By letting $s_{e_1\ldots e_n}$ be $s_{e_1}\ldots s_{e_n}$ and $s_v=p_v$ for $e_1,\ldots ,e_n\in E^1$ and $v\in E^0,$ $s_p$ is defined for every path $p.$

The set $\{p_v, s_e\mid v\in E^0, e\in E^1\}$ is referred to as a {\em Cuntz-Krieger $E$-family}. 
For such an $E$-family and an element $z$ of the unit
circle $\mathbb T$, one defines a map $\gamma^E_z$ by $\gamma^E_z(p_v)=p_v$ and $\gamma^E_z(s_e)=zs_e$   and then uniquely extends this map to a $*$-automorphism of $C^*(E)$ (we assume a homomorphism of a $C^*$-algebra to be bounded). The {\em gauge action} $\gamma^E$ on $\mathbb T$ is given by $\gamma^E(z)=\gamma^E_z.$ Note that $\gamma^E_z(s_ps_q^*)=z^{|p|-|q|}s_ps_q^*$ for $z\in \mathbb T$ and paths $p$ and $q.$ The presence of the degree $|p|-|q|$ of $pq^*$ in the previous formula explains the connection of this action and the $\mathbb Z$-grading of $L_{\mathbb C}(E)$.
If the integral of a continuous function $f:\mathbb T\to \mathbb C$ over $\mathbb T$ is defined by  $\int_{\mathbb T}f(z)dz=\int_0^1 f(e^{2\pi i t})dt,$
the gauge action on $C^*(E)$ determines a $\mathbb Z$-grading of $C^*(E)$ (see \cite[Proposition 5.2.11]{LPA_book}) so that \[C^*(E)_n=\{x\in C^*(E)\mid \int_{\mathbb T}z^{-n}\gamma^E_z(x)dz=x\}\label{*}\tag{*}\]
is the completion of $L_{\mathbb C}(E)_n$ and $C^*(E)$ is the completion of $\bigoplus_{n\in\mathbb Z} C^*(E)_n.$ We note that this grading is not a grading in the algebraic sense, but in the $C^*$-algebra sense (see the paragraph following \cite[Theorem 5.2.9.]{LPA_book}). 

A closed ideal $J$ of a graph $C^*$-algebra $C^*(E)$ is {\em gauge-invariant} if $\gamma^E_z(J)=J$ for every $z\in \mathbb T.$ By \cite[Theorem 3.6]{Bates_et_al}, every such ideal $J$ is the completion of the linear span
of the elements $s_ps_q^*$ for paths $p,q$ with $\ra(p)=\ra(q)\in H$ and the elements $s_pp_v^Hs_q^*$ for paths $p,q$ with $\ra(p)=\ra(q)=v\in S$ where $p_v^H=p_v-\sum_{e\in \so^{-1}(v)\cap \ra^{-1}(E^0-H)}s_es_e^*$ for $v\in B_H$ and where $(H, S)$ is the admissible pair uniquely determined by $J$ and defined analogously as for an ideal of $L_K(E)$. Also as before, an admissible pair $(H,S)$ uniquely determines a closed gauge-invariant ideal $J(H,S)$ which is graded in the $C^*$-algebra sense.  

If $R$ is a $C^*$-algebra with an action $\beta: \mathbb T\to$ Aut$(R),$ we say that a $*$-homomorphism $\phi: C^*(E)\to R$ is {\em gauge-invariant} if $\beta_z\circ \phi=\phi\circ \gamma^E_z$ for every $z\in\mathbb T.$ Thus, if $J$ is a closed gauge-invariant ideal of $C^*(E),$ $F$ is a graph, and $\phi: C^*(F)\to J$ is a $*$-homomorphism, $\phi$ is gauge-invariant if $\gamma_z^E|_J\circ \phi=\phi\circ \gamma^F_z$ for every $z\in\mathbb T.$ It is direct to see that every such gauge-invariant map $\phi$ is graded. Indeed, using the formula (\ref{*}) and the fact that $\phi$ is bounded, one directly checks that $\phi(C^*(F)_n)\subseteq J_n.$

\begin{corollary}
For an admissible pair $(H,S)$ of a graph $E,$
there is a gauge-invariant (thus graded) $*$-isomorphism 
\[\phi: C^*(\Por)\cong J(H,S).\]
Thus, for every closed gauge-invariant ideal $J$ of a graph $C^*$-algebra, there is a gauge-invariant (thus graded) $*$-isomorphism mapping the graph $C^*$-algebra of the porcupine graph corresponding to $J$ onto $J.$ 
\label{corollary_C_star} 
\end{corollary}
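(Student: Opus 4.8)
The plan is to mirror the proof of Theorem \ref{theorem_ideals} verbatim, replacing the algebraic Universal Property of $L_K(E)$ with the universal property of the graph $C^*$-algebra and the Graded Uniqueness Theorem with the Gauge-Invariant Uniqueness Theorem. Concretely, define the same map $\phi$ on generators: for $v\in\Por^0$ set $\phi(p_v)$ to be $p_v$, $p_v^H$, $s_ps_p^*$, or $s_p p_{\ra(p)}^H s_p^*$ according to whether $v\in H$, $v\in S$, $v=w^p$ with $p\in F_1(H,S)$, or $v=w^p$ with $p\in F_2(H,S)$; and for $g\in\Por^1$ set $\phi(s_g)$ to be $s_e$, $s_e p_{\ra(e)}^H$, $s_e s_p s_p^*$, or $s_e s_p p_{\ra(p)}^H s_p^*$ following the four cases in the theorem's proof. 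The computations verifying (CK1), (CK2), and (CK3) for these images inside $J(H,S)$ are the $C^*$-analogues of those already carried out — (CK1) and (CK2) are identical, and (CK3) holds because each $\phi(s_g)\phi(s_g)^*$ is a projection of the form $s_p(\cdot)s_p^*$ dominated by $p_{\so(g)}$ as computed in the (CK2) check. Hence $\{\phi(p_v),\phi(s_g)\}$ is a Cuntz--Krieger $\Por$-family in $J(H,S)$, and the universal property gives a $*$-homomorphism $\phi: C^*(\Por)\to J(H,S)$.

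Next I would check gauge-invariance directly on generators: since $\phi(p_v)$ is fixed by $\gamma^E_z$ (each summand $s_es_e^*$ in $p_v^H$ has gauge weight $1$ and $s_p(\cdot)s_p^*$ contributes weight $|p|-|p|=0$), and $\phi(s_g)$ has gauge weight exactly $1$ for every edge $g$ of $\Por$ (the $f^p$ generators map to products with a single uncancelled edge $e$), the identity $\gamma^E_z|_{J(H,S)}\circ\phi=\phi\circ\gamma^{\Por}_z$ holds on generators and therefore, both sides being $*$-homomorphisms (equivalently, bounded), on all of $C^*(\Por)$. By the remark preceding the corollary, gauge-invariance then forces $\phi$ to be graded, i.e. $\phi(C^*(\Por)_n)\subseteq J(H,S)_n$.

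Injectivity follows from the Gauge-Invariant Uniqueness Theorem for graph $C^*$-algebras (see \cite[Theorem 5.2.9]{LPA_book} or the original in \cite{Bates_et_al}): a gauge-equivariant $*$-homomorphism out of $C^*(\Por)$ is injective as soon as $\phi(p_v)\neq 0$ for every $v\in\Por^0$, which holds since each $\phi(p_v)$ is a nonzero projection in $J(H,S)$. For surjectivity I would repeat the argument of Theorem \ref{theorem_ideals}: by \cite[Theorem 3.6]{Bates_et_al}, $J(H,S)$ is the closed linear span of $s_ps_q^*$ with $\ra(p)=\ra(q)\in H$ and $s_p p_v^H s_q^*$ with $\ra(p)=\ra(q)=v\in S$; since $\phi$ is a $*$-homomorphism with closed image (a $*$-homomorphism of $C^*$-algebras has closed image), it suffices to show every $s_p$ with $\ra(p)\in H$ and every $s_p p_{\ra(p)}^H$ with $\ra(p)\in S$ lies in the image, and the same case analysis (cases (i), (ii), (iii) for $H$, and the single computation for $S$) produces explicit preimages built from the $f^{\,\cdot}$ generators, word for word as in the algebraic proof with $s$ in place of path products. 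The second sentence of the corollary then follows from the fact, recalled above, that every closed gauge-invariant ideal of a graph $C^*$-algebra equals $J(H,S)$ for its associated admissible pair $(H,S)$.

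The only genuinely non-routine point — and the one I would be most careful about — is the appeal to the Gauge-Invariant Uniqueness Theorem: one must confirm that the version available (e.g. \cite[Theorem 5.2.9]{LPA_book}) applies to the not-necessarily-unital, possibly-infinite graph $\Por$ and yields injectivity from gauge-equivariance together with nonvanishing on vertex projections, exactly the hypotheses established here; everything else transfers mechanically from the proof of Theorem \ref{theorem_ideals}.
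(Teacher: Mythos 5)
Your proposal is correct and follows essentially the same route as the paper: define the Cuntz--Krieger $\Por$-family in $J(H,S)$ by the same formulas, verify (CK1)--(CK3), invoke the universal property, check gauge-equivariance on generators, deduce injectivity from the Gauge-Invariant Uniqueness Theorem, and rerun the surjectivity computation (your observation that the image of a $*$-homomorphism of $C^*$-algebras is closed is a point the paper leaves implicit). The one place the paper is more careful is (CK3) for edges $g\in E^1$ whose source is an infinite emitter lying in $H\cup S$, where the (CK2)-based domination argument is unavailable and one instead transfers the inequality from $C^*(E)$.
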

\begin{proof}
One defines a map $\phi$ on $\{p_v, s_g\mid v\in \Por^0, g\in \Por^1\}$ analogously as in Theorem \ref{theorem_ideals}. We claim that the image of this map constitutes a Cuntz-Krieger $\Por$-family. Indeed, $\{\phi(p_v)\mid v\in \Por^0\}$ is a set of orthogonal projections in $J(H,S)$ and $\{\phi(s_g)\mid g\in \Por^1\}$ is a set of partial isometries in $I(H,S)$ with mutually orthogonal ranges. One checks that (CK1) and (CK2) hold in the same way as it was done in the proof of Theorem \ref{theorem_ideals}. If $g\in\Por^1$ and $\so(g)$ is regular, the requirement $\phi(s_g)\phi(s_g)^*\leq \phi(p_{\so(g)})$ follows from (CK2). If $g\in\Por^1$ and $\so(g)$ is an infinite emitter, $\so(g)\in H\cup S$ and $g$ is an edge in $E^1.$ Thus, (CK3) holds for $s_g$ in $C^*(\Por)$ since it holds for $s_g$ in $C^*(E).$  
  
The map $\phi$ has a unique extension to a $*$-homomorphism $\phi:C^*(\Por)\to J(H,S)$ by the Universal Property. To show that $\phi$ is gauge-invariant, it is sufficient to check that the condition $\gamma_z^E|_I\circ \phi=\phi\circ \gamma^{\Por}_z$ holds on $\{p_v, s_g\mid v\in \Por^0, g\in \Por^1\}$ which is direct to check using definitions. 

The map $\phi$ is nonzero on every element $p_v$ for $v\in \Por^0,$ so $\phi$ is injective by the Gauge-Invariant Uniqueness Theorem (see \cite[Theorem 5.2.12]{LPA_book}). The surjectivity of $\phi$ holds by arguments analogous to those in the proof of Theorem \ref{theorem_ideals}.
\end{proof}


\begin{thebibliography}{10}
\bibitem{LPA_book} G. Abrams, P. Ara, M. Siles Molina, Leavitt path algebras, Lecture Notes in Mathematics 2191, Springer, London, 2017. 

\bibitem{Gonzalo_Ranga_Mercedes_selfinjective}
G.  Aranda Pino, K. M. Rangaswamy, M.  Siles Molina, \emph{Weakly Regular and Self-Injective Leavitt Path
Algebras Over Arbitrary Graphs}, Algebr. Represent. Theory {\bf 14} (2011), 751--777. 

\bibitem{Bates_et_al}  T. Bates, J. H. Hong, I. Raeburn, W. Szyma\'{n}ski, \emph{The ideal structure of the $C^*$-algebras of infinite graphs}, Illinois J. Math. {\bf 46 (4)} (2002), 1159--1176. 

\bibitem{Roozbeh_book} R. Hazrat, Graded rings and graded Grothendieck groups, London Math. Soc. Lecture Note Ser. 435, Cambridge Univ. Press, 2016.

\bibitem{Ruiz_Tomforde} E. Ruiz, M. Tomforde, \emph{Ideals in Graph Algebras,} Algebr. Representat. Theory {\bf 17 (3)} (2014), 849--861.

\bibitem{Tomforde} M. Tomforde, \emph{Uniqueness theorems and ideal structure for Leavitt path algebras} J. Algebra {\bf 318 (1)} (2007), 270--299.

\bibitem{Lia_traces} L. Va\v s, \emph{Canonical traces and directly finite Leavitt path algebras}, Algebr. Represent. Theory {\bf 18} (2015), 711--738. 


\end{thebibliography}
\end{document}